\newtheorem{theorem}{Theorem}[section]
\newtheorem{lemma}[theorem]{Lemma}
\newtheorem{Proposition}[theorem]{Proposition}
\newtheorem{Note}[theorem]{Note}
\newtheorem{Corollary}[theorem]{Corollary}
\newtheorem{Remark}[theorem]{Remark}
\newtheorem{Definition}[theorem]{Definition}
\numberwithin{equation}{section}
\title{On Larsen's conjecture on the ranks of Elliptic Curves}
\author{A. Hadavand}
\address{Department of Mathematics, Arak Branch, IAU, Arak, Iran.}
\email{hadavand@iau.ir; hadavand.math@gmail.com}
\date{\today}
\begin{document}
\begin{abstract}
Let $E$ be an elliptic curve over $\mathbb{Q}$ and $G=\langle\sigma_1, \dots, \sigma_n\rangle$ be a finitely generated subgroup of 
$\operatorname{Gal}(\overline{\mathbb{Q}}/ \mathbb{Q})$.  Larsen's conjecture  claims that the rank of the Mordell-Weil group $E(\overline{\mathbb{Q}}^G)$
is infinite where ${\overline{\mathbb Q}}^G$ is the $G$-fixed sub-field of $\overline{\mathbb Q}$. 
In this paper we prove the conjecture for the case in which  $\sigma_i$ for each $i=1, \dots, n$ is an element of some infinite families of elements of 
$\operatorname{Gal}(\overline{\mathbb{Q}}/ \mathbb{Q})$. 
\end{abstract}

\maketitle

\section{Introduction}

Let $E$ be an elliptic curve over $\mathbb{Q}$ and $G=\langle\sigma_1, \dots, \sigma_n\rangle$ be a finitely generated subgroup of 
$\operatorname{Gal}(\overline{\mathbb{Q}}/ \mathbb{Q}).$   Michael Larsen  \cite{Lar} conjectured that 
the rank of the Mordell-Weil group  $E(\overline{\mathbb{Q}}^G)$ is infinite  where ${\overline{\mathbb Q}}^G$ is the $G$-fixed sub-field of $\overline{\mathbb Q}$.
 The conjecture has been proved by Bo-Hae Im \cite{Im} for the case $n=1$. Moreover in this case Bo-Hae Im and Larsen in \cite{Im-La1} proved that the rank of $E(\overline{\mathbb{Q}}^{\sigma_1})\otimes\mathbb{Q}$ is infinite when $E$ is an abelian variety over $\mathbb{Q}$. For the case $n=2$ some results are given in \cite{Alv, Im-Al}. See also \cite{Tim} for some evidence when $G=\langle\sigma_1, \dots, \sigma_n\rangle$.
 \\ Let $E$ be an elliptic curve over a global field $k$ and $\overline{k}$ be a separable closure of $k$. Breuer and Im \cite{Br-Im} proved that the rank of $E(\overline{k}^{\sigma})$ is infinite for every $\sigma\in\operatorname{Gal}(\overline{k}/k)$, in the following two cases. Firstly when $k$ is a global function field of odd characteristic and $E$ is parametrized by a Drinfeld modular curve, and secondly when $k$ is a totally real number field and $E$ is parametrized by Shimura curve. To find more and a survey about the conjecture see \cite{Im-La2}.\\ 
In this paper we prove the conjecture for the case in which  $\sigma_i$ for each $i=1, \dots, n$ is an element of an infinite family of elements of 
$\operatorname{Gal}(\overline{\mathbb{Q}}/ \mathbb{Q})$. 
 The strategy of the proof in \cite{Im} is using modularity and finding $\sigma_1$-fixed Heegner points which generate an infinite rank
subgroup of the Mordell-Weil group  $E(\overline{\mathbb{Q}}^{\sigma_1}).$

In this paper we find Heegner points over ``broad'' rather than ``deep''  Galois extensions. This gives us a sequence of independent Heegner points
over Hilbert class fields attached to an infinite family of imaginary quadratic fields, instead of a tower of ring class fields over an imaginary quadratic field. 
Indeed we present a family of Hilbert class fields $\{H_p\}_{p\in{{\mathcal A}_{N}}}$ where  $N$ is the conductor of $E$ and ${\mathcal A}_{N}$ is an infinite  family of prime numbers, and show that the rank of 
 $E((\prod_{p\in{{\mathcal A}^{'}}}{H_p})^G)$ is infinite for any  arbitrary infinite subset  ${\mathcal A}^{'}$  of ${\mathcal A}_{N}.$

\section{Heegner Points}

In this section $\mathcal{O}$ is an order in an imaginary quadratic field
 $k=\mathbb{Q}(\sqrt{-d})$ where $d$ is a positive integer number, and $k_{\mathcal{O}}$
is the ring class field of $k$ attached to $\mathcal{O}$. Moreover $N$ is a fixed 
positive integer number and $X_{0}(N)$ is the modular
curve. Recall that $X_0(N)$ is an algebraic curve over $\mathbb{C}$ which is the solution to the moduli space 
problem of classifying pairs $(E, C)$ where $E$ is an elliptic curve and $C$ is a cyclic subgroup 
of $E$ of order $N$. $X_0(N)$ possess a model over $\mathbb{Q}$, in this model, non-cuspidal points 
on the modular curve are given by a pair $(j(E), j(E'))$ in which $j(E)$ and $j(E')$ are the $j$-invariants
of elliptic curves $E$ and $E'$, respectively and there is an isogeny $\phi: E\rightarrow E'$ with 
$Ker(\phi)\cong \frac{\mathbb{Z}}{N\mathbb{Z}}$ (see page 18 \cite{Dar}).
\begin{Definition}
A Heegner point on $X_{0}(N)$ attached to $\mathcal{O}$ is a point $(j(E), j(E'))$ 
 such that $End(E)\cong End(E')\cong \mathcal{O}$, i.e., $E$ and $E'$ both have complex multiplication by $\mathcal{O}$.
\end{Definition}

We denote the set of Heegner points on $X_{0}(N)$ attached to $\mathcal{O}$ by $HP^{(N)}(\mathcal{O})$. 
Moreover we say that a Heegner point is attached to $k$ when $\mathcal{O}$ is the maximal order in $k$, and
write $HP^{(N)}(k)$ instead of $HP^{(N)}(\mathcal{O})$.
\begin{Proposition}\label{Pro1}
Let $gcd(Disc(\mathcal{O}), N)=1$ where $Disc(\mathcal{O})$ is the discriminant of $\mathcal{O}$. Then
$HP^{(N)}(\mathcal{O})$ is nonempty if and only if every prime dividing $N$ is split in $k$.
\end{Proposition}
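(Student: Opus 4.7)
The plan is to translate the geometric condition on Heegner points into an arithmetic condition about ideals of $\mathcal{O}$, and then analyze this condition prime by prime using the coprimality hypothesis.

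First I would set up the standard correspondence. Working analytically over $\mathbb{C}$, an elliptic curve with CM by $\mathcal{O}$ has the form $E_{\mathfrak{a}} = \mathbb{C}/\mathfrak{a}$ for a proper $\mathcal{O}$-ideal $\mathfrak{a}$, and an isogeny $\phi\colon E_{\mathfrak{a}}\to E_{\mathfrak{b}}$ comes from an inclusion $\mathfrak{b}\hookrightarrow\mathfrak{a}$ of proper $\mathcal{O}$-ideals. The kernel $\mathfrak{a}/\mathfrak{b}$ is an $\mathcal{O}$-module, so if it is cyclic of order $N$ as an abelian group then $\mathfrak{a}/\mathfrak{b}\cong \mathcal{O}/\mathfrak{n}$ where $\mathfrak{n}=\mathrm{Ann}_{\mathcal{O}}(\mathfrak{a}/\mathfrak{b})$, and the condition $\#(\mathfrak{a}/\mathfrak{b})=N$ becomes $\mathcal{O}/\mathfrak{n}\cong\mathbb{Z}/N\mathbb{Z}$. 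Conversely, any proper $\mathcal{O}$-ideal $\mathfrak{n}$ with $\mathcal{O}/\mathfrak{n}\cong\mathbb{Z}/N\mathbb{Z}$ produces a Heegner point by taking $\mathfrak{a}=\mathcal{O}$, $\mathfrak{b}=\mathfrak{n}$. Thus $HP^{(N)}(\mathcal{O})\neq\emptyset$ if and only if such an ideal $\mathfrak{n}$ exists.

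Next I would reduce the existence of $\mathfrak{n}$ to a local question at each rational prime $p\mid N$. Since $\mathrm{Disc}(\mathcal{O})=f^2\,\mathrm{Disc}(\mathcal{O}_k)$ with $f$ the conductor, the hypothesis $\gcd(\mathrm{Disc}(\mathcal{O}),N)=1$ means that no $p\mid N$ ramifies in $k$ and no $p\mid N$ divides $f$, so $\mathcal{O}\otimes\mathbb{Z}_p=\mathcal{O}_k\otimes\mathbb{Z}_p$ for all $p\mid N$. By the Chinese Remainder Theorem, an ideal $\mathfrak{n}$ of $\mathcal{O}$ with $\mathcal{O}/\mathfrak{n}\cong\mathbb{Z}/N\mathbb{Z}$ corresponds, writing $N=\prod p^{a_p}$, to a choice for each $p\mid N$ of an ideal $\mathfrak{n}_p$ of $\mathcal{O}_p:=\mathcal{O}\otimes\mathbb{Z}_p$ with $\mathcal{O}_p/\mathfrak{n}_p\cong\mathbb{Z}/p^{a_p}\mathbb{Z}$.

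Now I would analyze the two possibilities at each $p\mid N$. If $p$ is split, $\mathcal{O}_p\cong\mathbb{Z}_p\times\mathbb{Z}_p$, and taking $\mathfrak{n}_p=\mathfrak{p}^{a_p}$ for one of the two primes $\mathfrak{p}$ above $p$ yields $\mathcal{O}_p/\mathfrak{n}_p\cong\mathbb{Z}/p^{a_p}\mathbb{Z}$, as required. If $p$ is inert, $\mathcal{O}_p$ is a complete DVR with uniformizer $p$ and residue field $\mathbb{F}_{p^2}$; its nonzero proper ideals are the $p^k\mathcal{O}_p$, and each quotient $\mathcal{O}_p/p^k\mathcal{O}_p$ is a free $\mathbb{Z}/p^k\mathbb{Z}$-module of rank $2$, hence never cyclic over $\mathbb{Z}$. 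Combining these two observations gives the equivalence: $\mathfrak{n}$ exists iff every $p\mid N$ is split in $k$.

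The main obstacle I foresee is the first step, namely justifying that the analytic/lattice-theoretic description of CM isogenies is faithful to the algebraic definition of Heegner points in $X_0(N)(\mathbb{Q})$, including the proviso that both $E$ and $E'$ have endomorphism ring exactly $\mathcal{O}$ rather than a strictly larger order. Here the hypothesis that $\mathfrak{n}$ is a \emph{proper} $\mathcal{O}$-ideal (automatic above primes $p\mid N$ since $\mathcal{O}_p$ is already the maximal order under our coprimality assumption) is what ensures $\mathrm{End}(E_{\mathfrak{b}})=\mathcal{O}$; once this is in place the rest is a straightforward local computation.
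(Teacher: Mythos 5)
Your argument is correct, but note that the paper does not actually prove this proposition: its ``proof'' is a one-line citation to Proposition 3.8 of Darmon's book, so you have supplied the standard argument that the paper outsources. Your reduction --- Heegner points attached to $\mathcal{O}$ correspond to inclusions of proper $\mathcal{O}$-ideals with cyclic quotient of order $N$, hence to proper ideals $\mathfrak{n}$ with $\mathcal{O}/\mathfrak{n}\cong\mathbb{Z}/N\mathbb{Z}$, followed by the local split/inert dichotomy (ramified primes being excluded by $\gcd(\mathrm{Disc}(\mathcal{O}),N)=1$) --- is exactly the classical proof, and your observation that coprimality to the conductor makes $\mathfrak{n}$ automatically proper, so that $\mathrm{End}(\mathbb{C}/\mathfrak{n})=\mathcal{O}$ exactly, correctly handles the one point where the lattice picture could fail to match the definition of $HP^{(N)}(\mathcal{O})$. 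Two small remarks: (i) your isogeny runs in the wrong direction --- the inclusion $\mathfrak{b}\hookrightarrow\mathfrak{a}$ induces $\mathbb{C}/\mathfrak{b}\to\mathbb{C}/\mathfrak{a}$ with kernel $\mathfrak{a}/\mathfrak{b}$, i.e.\ $E_{\mathfrak{b}}\to E_{\mathfrak{a}}$ rather than $E_{\mathfrak{a}}\to E_{\mathfrak{b}}$ --- but since the dual of a cyclic $N$-isogeny is again cyclic of degree $N$, this does not affect the nonemptiness statement; (ii) since the paper's Definition of $HP^{(N)}(\mathcal{O})$ imposes no rationality condition (that only enters in Theorem 2.4), the complex-analytic uniformization you use is fully adequate and the ``main obstacle'' you flag is not actually an obstacle here.
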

\begin{proof}
See Proposition 3.8 in \cite{Dar}.
\end{proof}
By Proposition \ref{Pro1} we have the following definition.
\begin{Definition} 
 We say that $\mathcal{O}$ satisfies the Heegner hypothesis for $N$ if the following two conditions are satisfied:
\begin{itemize}
\item $\gcd(Disc(\mathcal{O}), N)=1.$
\item Every prime dividing $N$ is split in $k$.
\end{itemize}
\end{Definition}
\begin{theorem}\label{T3}
Let $y=(j(E), j(E'))\in HP^{(N)}(\mathcal{O})$, then 
\begin{enumerate}
\item $k_{\mathcal{O}}=k(j(E))$.
\item $HP^{(N)}(\mathcal{O})\subset X_0(N)(k_{\mathcal{O}})$, i.e., the Heegner points on $X_0(N)$ attached
to $\mathcal{O}$ are defined over $k_{\mathcal{O}}.$
\item Let $C_y=\{y^{\sigma}| \sigma\in \operatorname{Gal}(k_{\mathcal{O}}/k)\}$ be the full set of
Galois conjugates of $y$, then $C_y=HP^{(N)}(\mathcal{O})$.
\end{enumerate}
\end{theorem}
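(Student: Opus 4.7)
My plan is to deduce all three parts from the main theorem of complex multiplication for the order $\mathcal{O}$, combined with the standard analytic parametrisation of elliptic curves $E$ with $\operatorname{End}(E) \cong \mathcal{O}$ by the ideal class group $\operatorname{Pic}(\mathcal{O})$.

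First I would set up notation analytically. Any $E/\mathbb{C}$ with $\operatorname{End}(E) \cong \mathcal{O}$ is isomorphic to $\mathbb{C}/\mathfrak{a}$ for some proper fractional $\mathcal{O}$-ideal $\mathfrak{a}$, and the isomorphism class depends only on $[\mathfrak{a}] \in \operatorname{Pic}(\mathcal{O})$. Under the Heegner hypothesis (which guarantees that every rational prime dividing $N$ splits in $k$) there exists an ideal $\mathfrak{n} \subset \mathcal{O}$ with $\mathcal{O}/\mathfrak{n} \cong \mathbb{Z}/N\mathbb{Z}$, and the natural projection $\mathbb{C}/\mathfrak{a} \twoheadrightarrow \mathbb{C}/\mathfrak{n}^{-1}\mathfrak{a}$ is a cyclic $N$-isogeny. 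Consequently every Heegner point attached to $\mathcal{O}$ takes the shape $y_{\mathfrak{a}} = \bigl(j(\mathbb{C}/\mathfrak{a}), j(\mathbb{C}/\mathfrak{n}^{-1}\mathfrak{a})\bigr)$ for some $\mathfrak{a}$ and some admissible $\mathfrak{n}$.

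For part (1) I would cite the rationality half of the main theorem of CM: for every proper $\mathcal{O}$-ideal $\mathfrak{a}$, the $j$-invariant $j(\mathbb{C}/\mathfrak{a})$ is algebraic and $k(j(\mathbb{C}/\mathfrak{a})) = k_{\mathcal{O}}$. Writing $E \cong \mathbb{C}/\mathfrak{a}$ this gives (1). Part (2) is then immediate: $E'$ also has CM by $\mathcal{O}$, so the same argument applied to $E'$ forces $j(E') \in k_{\mathcal{O}}$, and since the pair $(j(E), j(E'))$ is the defining data of $y$ on the $\mathbb{Q}$-model of $X_0(N)$, we conclude $y \in X_0(N)(k_{\mathcal{O}})$.

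For part (3) I would exploit the explicit Galois action supplied by CM theory: under the Artin identification $\operatorname{Pic}(\mathcal{O}) \cong \operatorname{Gal}(k_{\mathcal{O}}/k)$ one has
\[
\sigma_{[\mathfrak{b}]}\bigl(j(\mathbb{C}/\mathfrak{a})\bigr) = j(\mathbb{C}/\mathfrak{b}^{-1}\mathfrak{a}).
\]
Because this formula is compatible with isogenies, $\sigma_{[\mathfrak{b}]}$ sends $y_{\mathfrak{a}}$ to $y_{\mathfrak{b}^{-1}\mathfrak{a}}$; as $[\mathfrak{b}]$ ranges over $\operatorname{Pic}(\mathcal{O})$ the orbit of a fixed $y$ therefore sweeps out every Heegner point attached to $\mathcal{O}$, and a cardinality comparison with $|\operatorname{Pic}(\mathcal{O})| = |\operatorname{Gal}(k_{\mathcal{O}}/k)|$ closes the argument. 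The only real obstacle is invoking the main theorem of CM cleanly, both the rationality statement needed for (1)--(2) and the reciprocity formula needed for (3); since these are classical and available from the cited reference \cite{Dar}, the proof essentially reduces to careful bookkeeping.
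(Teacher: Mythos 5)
For parts (1) and (2) your route coincides with the paper's: both rest on the first main theorem of complex multiplication, namely that $k(j(\mathbb{C}/\mathfrak{a}))=k_{\mathcal{O}}$ for any proper $\mathcal{O}$-ideal $\mathfrak{a}$ (the paper cites Theorem 11.1 of Cox for exactly this), and (2) follows by applying it to both coordinates of $y$. For part (3) the paper gives no argument at all beyond a citation, whereas you unpack the citation into the reciprocity law $\sigma_{[\mathfrak{b}]}\bigl(j(\mathbb{C}/\mathfrak{a})\bigr)=j(\mathbb{C}/\mathfrak{b}^{-1}\mathfrak{a})$ and a transitivity claim. That is in principle a more self-contained route, but it is precisely where a genuine gap appears.

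The gap is in the sentence ``the orbit of a fixed $y$ therefore sweeps out every Heegner point attached to $\mathcal{O}$.'' A Heegner point is really a triple $(\mathcal{O},\mathfrak{n},[\mathfrak{a}])$: the point of $X_0(N)$ remembers the cyclic subgroup $E[\mathfrak{n}]$, and the Galois action you invoke sends $(\mathcal{O},\mathfrak{n},[\mathfrak{a}])$ to $(\mathcal{O},\mathfrak{n},[\mathfrak{b}^{-1}\mathfrak{a}])$, so it \emph{preserves} $\mathfrak{n}$. Hence $C_y$ consists of exactly the $h=|\operatorname{Pic}(\mathcal{O})|$ Heegner points sharing the $\mathfrak{n}$ of $y$. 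But under the Heegner hypothesis every prime dividing $N$ splits, so if $N$ has $r$ distinct prime factors there are $2^{r}$ admissible ideals $\mathfrak{n}$ with $\mathcal{O}/\mathfrak{n}\cong\mathbb{Z}/N\mathbb{Z}$, and $HP^{(N)}(\mathcal{O})$ as defined in this paper (no $\mathfrak{n}$ fixed) is a disjoint union of $2^{r}$ Galois orbits. Your closing ``cardinality comparison'' only shows $|C_y|\le h$; it never establishes $|HP^{(N)}(\mathcal{O})|=h$, and that equality fails whenever more than one $\mathfrak{n}$ exists. To make the argument correct you must either build a fixed choice of $\mathfrak{n}$ into the definition of $HP^{(N)}(\mathcal{O})$ (as the sources behind the paper's citation do) or weaken (3) to the statement that $C_y$ is the set of Heegner points with the same kernel ideal as $y$ --- which still has odd cardinality $h$ and is all that the parity argument of Proposition \ref{pro3} actually requires.
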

\begin{proof}
\begin{enumerate}
\item Since $E$ and $E'$ both have complex multiplication by $\mathcal{O}$, by Theorem 11.1 \cite{Cox}
we have $k_{\mathcal{O}}=k(j(E))=k(j(E'))$.
\item By 1 we have $y\in k_{\mathcal{O}}^2$.
\item See Corollary 5 in \cite{Sil}.
\end{enumerate}
\end{proof}

Let $E$ be an elliptic curve over $\mathbb{Q}$  of conductor $N$ and $\Phi_E:  X_0 (N)  \xrightarrow{} E$ be 
a modular parametrization of $E$ which is a map of algebraic curves defined over
   $\mathbb{Q}$  (see Theorem 8.8.4 in \cite{Dia}), we call $\Phi_E(HP^{(N)}(\mathcal{O}))$ the set of Heegner points
   on $E$ attached to $\mathcal{O}$.
\begin{theorem}\label{Theo2}
Let $E$ be an elliptic curve over $\mathbb{Q}$  of conductor $N$ and $\Phi_E:  X_0 (N)  \xrightarrow{} E$ be 
a modular parametrization of $E$. Let $y\in HP^{(N)}({\mathcal{O}})$, then $\Phi_E(y)\in E(k_{\mathcal{O}})$.
\end{theorem}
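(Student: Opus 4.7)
The plan is to exploit Galois-equivariance of the modular parametrization together with part (2) of Theorem \ref{T3}. Concretely, I would first note that by Theorem \ref{T3}(2), any Heegner point $y \in HP^{(N)}(\mathcal{O})$ already lies in $X_0(N)(k_{\mathcal{O}})$, i.e. $y$ is fixed by every element of $\operatorname{Gal}(\overline{\mathbb{Q}}/k_{\mathcal{O}})$. The only additional ingredient needed is that $\Phi_E$ is a morphism of algebraic curves \emph{defined over} $\mathbb{Q}$, which is part of the content of the cited Theorem 8.8.4 in \cite{Dia}. Given these two facts, the conclusion is essentially immediate.

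In more detail, I would proceed as follows. Fix an arbitrary $\sigma \in \operatorname{Gal}(\overline{\mathbb{Q}}/k_{\mathcal{O}})$. Since $k_{\mathcal{O}} \supset \mathbb{Q}$ and $\Phi_E$ is defined over $\mathbb{Q}$, the morphism $\Phi_E$ commutes with the Galois action, so
\[
\sigma\bigl(\Phi_E(y)\bigr) \;=\; \Phi_E\bigl(\sigma(y)\bigr).
\]
By Theorem \ref{T3}(2), $\sigma(y) = y$, hence $\sigma(\Phi_E(y)) = \Phi_E(y)$. Since $\sigma$ was arbitrary in $\operatorname{Gal}(\overline{\mathbb{Q}}/k_{\mathcal{O}})$, this shows $\Phi_E(y) \in E(k_{\mathcal{O}})$.

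There is essentially no hard step: the only thing to be careful about is the $\mathbb{Q}$-rationality of $\Phi_E$, which one must explicitly invoke so that pulling $\sigma$ through $\Phi_E$ is justified (this is precisely what distinguishes a $\mathbb{Q}$-rational modular parametrization from an arbitrary $\mathbb{C}$-morphism). Once that is in place, the statement is a one-line Galois-invariance argument, and no further input from the theory of complex multiplication or ring class fields is required beyond what is already packaged into Theorem \ref{T3}.
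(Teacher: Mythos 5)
Your proof is correct. The paper itself gives no argument here---it simply cites Theorem 3.6 of \cite{Dar}---and the Galois-equivariance argument you supply (using that $\Phi_E$ is defined over $\mathbb{Q}$ and that $y\in X_0(N)(k_{\mathcal{O}})$ by Theorem \ref{T3}(2)) is exactly the standard one-line proof that the citation packages, so your write-up is a valid self-contained replacement for it.
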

\begin{proof}
See Theorem 3.6 in \cite{Dar}.
\end{proof}
In the end of this section we stay some results on the structure of 
  $\operatorname{Gal}(k_{\mathcal{O}}/\mathbb{Q})$ which
we need. An element $\sigma\in \operatorname{Gal}(k_{\mathcal{O}}/\mathbb{Q})$ is called 
an involution if its restriction to $k$ is not the identity. For example by Lemma 5.28 \cite{Cox}  the complex
conjugation, $\tau$, is in $\operatorname{Gal}(k_{\mathcal{O}}/\mathbb{Q})$ is an involution with
$\tau^2=1.$
\begin{Proposition}\label{Pro2}
Let $\sigma\in \operatorname{Gal}(k_{\mathcal{O}}/\mathbb{Q})$ be an involution, then 
$$\sigma\gamma=\gamma^{-1}\sigma \ \ \ \text{for all}\ \ \gamma\in \operatorname{Gal}(k_{\mathcal{O}}/k). $$
\end{Proposition}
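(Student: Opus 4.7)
\emph{Proof plan.} The strategy is to reduce the statement to the case where $\sigma$ is the complex conjugation $\tau$, and then to invoke the Artin reciprocity interpretation of $\operatorname{Gal}(k_{\mathcal{O}}/k)$. Since $k_{\mathcal{O}}/k$ is a ring class field, $\operatorname{Gal}(k_{\mathcal{O}}/k)$ is abelian; since it has index $2$ in $\operatorname{Gal}(k_{\mathcal{O}}/\mathbb{Q})$, it is normal, so conjugation by $\sigma$ is an automorphism of $\operatorname{Gal}(k_{\mathcal{O}}/k)$. Any two involutions restrict to the unique nontrivial element of $\operatorname{Gal}(k/\mathbb{Q})$, so $\sigma\tau$ fixes $k$ pointwise; hence $\sigma = \gamma_0 \tau$ for some $\gamma_0 \in \operatorname{Gal}(k_{\mathcal{O}}/k)$. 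Granting the identity $\tau\gamma\tau^{-1} = \gamma^{-1}$ for every $\gamma \in \operatorname{Gal}(k_{\mathcal{O}}/k)$, one then computes
\[
\sigma\gamma\sigma^{-1} \;=\; \gamma_0\,\tau\gamma\tau^{-1}\gamma_0^{-1} \;=\; \gamma_0\,\gamma^{-1}\gamma_0^{-1} \;=\; \gamma^{-1},
\]
the last equality using the commutativity of $\operatorname{Gal}(k_{\mathcal{O}}/k)$. Multiplying on the right by $\sigma$ yields the claimed relation.

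It remains to prove the conjugation identity for $\tau$. Via Artin reciprocity (see Theorem~9.3 of \cite{Cox}) there is a canonical isomorphism $\operatorname{Pic}(\mathcal{O}) \xrightarrow{\sim} \operatorname{Gal}(k_{\mathcal{O}}/k)$ sending the class of a proper $\mathcal{O}$-ideal $\mathfrak{a}$ to the Artin symbol $\sigma_{\mathfrak{a}}$. Both sides carry a natural action of $\operatorname{Gal}(k/\mathbb{Q}) = \{1,\tau\}$: on the right by conjugation in $\operatorname{Gal}(k_{\mathcal{O}}/\mathbb{Q})$, and on the left by $[\mathfrak{a}] \mapsto [\bar{\mathfrak{a}}]$. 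The Artin map is equivariant for these actions, because at every unramified prime the Frobenius satisfies $\tau\,\sigma_{\mathfrak{p}}\,\tau^{-1} = \sigma_{\bar{\mathfrak{p}}}$. Now $\mathfrak{a}\bar{\mathfrak{a}} = N(\mathfrak{a})\mathcal{O}$ is principal, so $[\bar{\mathfrak{a}}] = [\mathfrak{a}]^{-1}$ in $\operatorname{Pic}(\mathcal{O})$. Translating through the Artin isomorphism gives $\tau\gamma\tau^{-1} = \gamma^{-1}$, as required.

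The main technical point is the $\operatorname{Gal}(k/\mathbb{Q})$-equivariance of the Artin map; this is a standard compatibility in class field theory, verified on Frobenius elements at unramified primes and then extended by the density/surjectivity of those Frobenii. Everything else—the reduction to $\tau$, the abelianness of $\operatorname{Gal}(k_{\mathcal{O}}/k)$, and the identity $[\bar{\mathfrak{a}}] = [\mathfrak{a}]^{-1}$—is either formal or an immediate norm computation.
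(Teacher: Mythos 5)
Your argument is correct, and it is genuinely different in character from what the paper does: the paper gives no proof at all for Proposition \ref{Pro2}, simply deferring to Proposition 7 of \cite{Sil}, whereas you supply the standard self-contained class-field-theoretic argument. Your two-step structure is sound: the reduction from a general involution $\sigma$ to complex conjugation $\tau$ works because $\sigma|_k\neq 1$ forces $\sigma=\gamma_0\tau$ with $\gamma_0\in\operatorname{Gal}(k_{\mathcal O}/k)$, and the abelianness of $\operatorname{Gal}(k_{\mathcal O}/k)\cong\operatorname{Pic}(\mathcal O)$ makes the inner factor $\gamma_0(\cdot)\gamma_0^{-1}$ disappear; note that you correctly avoid assuming $\sigma^2=1$, which the paper only deduces afterwards in Corollary \ref{Cor1}. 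The core identity $\tau\gamma\tau^{-1}=\gamma^{-1}$ via equivariance of the Artin map and the relation $\mathfrak a\bar{\mathfrak a}=N(\mathfrak a)\mathcal O$ is exactly the content of the cited reference, so in effect you have unpacked the black box. Two small points worth making explicit if this were to be written out in full: (i) the conjugation action of $\operatorname{Gal}(k/\mathbb Q)$ on $\operatorname{Gal}(k_{\mathcal O}/k)$ is well defined independently of the lift of $\tau$ precisely because the latter group is abelian, and one should record that $k_{\mathcal O}/\mathbb Q$ is Galois in the first place (Lemma 9.3 of \cite{Cox}); (ii) in the ideal-theoretic description $\operatorname{Gal}(k_{\mathcal O}/k)\cong I_k(f)/P_{k,\mathbb Z}(f)$ the class of $N(\mathfrak a)\mathcal O_k$ is trivial because $N(\mathfrak a)$ is a rational integer, which is exactly the congruence condition defining $P_{k,\mathbb Z}(f)$; this is where properness of the $\mathcal O$-ideal is used. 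What your approach buys is transparency and independence from the unpublished-style reference; what the paper's citation buys is brevity. Both are acceptable.
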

\begin{proof}
See Proposition 7 in \cite{Sil}.
\end{proof}

\begin{Corollary}\label{Cor1}
Let $\sigma\in \operatorname{Gal}(k_{\mathcal{O}}/\mathbb{Q})$ be an involution, then 
\begin{enumerate}
\item $\sigma^2=1$.
\item There are exactly $[k_{\mathcal{O}}:k]$ involutions in  $\operatorname{Gal}(k_{\mathcal{O}}/\mathbb{Q})$. 
\end{enumerate}
\end{Corollary}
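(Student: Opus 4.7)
The plan is to bootstrap from the fact that complex conjugation $\tau$ is a known involution with $\tau^2 = 1$ (cited just before Proposition \ref{Pro2}) and to use the commutation relation of Proposition \ref{Pro2} to reduce an arbitrary involution to $\tau$ times an element of $\operatorname{Gal}(k_{\mathcal{O}}/k)$.

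For part (1), I would first observe that since $[k:\mathbb{Q}] = 2$, the restriction map $\operatorname{Gal}(k_{\mathcal{O}}/\mathbb{Q}) \to \operatorname{Gal}(k/\mathbb{Q})$ has only two possible values, so any two involutions differ by an element of $\operatorname{Gal}(k_{\mathcal{O}}/k)$. Taking $\tau$ as reference, I write $\sigma = \gamma \tau$ with $\gamma \in \operatorname{Gal}(k_{\mathcal{O}}/k)$. Then
\[
\sigma^2 \;=\; \gamma \tau \gamma \tau \;=\; \gamma (\tau \gamma) \tau \;=\; \gamma \gamma^{-1} \tau^2 \;=\; 1,
\]
where the middle equality uses Proposition \ref{Pro2} applied to the involution $\tau$, namely $\tau \gamma = \gamma^{-1} \tau$, and the final equality uses $\tau^2 = 1$.

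For part (2), I would first note that the ring class field $k_{\mathcal{O}}$ is Galois over $\mathbb{Q}$ (a standard fact, also implicit in the statement of Proposition \ref{Pro2}), so $\operatorname{Gal}(k_{\mathcal{O}}/k)$ is a normal subgroup of index $[k:\mathbb{Q}] = 2$ in $\operatorname{Gal}(k_{\mathcal{O}}/\mathbb{Q})$. By definition, the involutions are precisely the elements of $\operatorname{Gal}(k_{\mathcal{O}}/\mathbb{Q}) \setminus \operatorname{Gal}(k_{\mathcal{O}}/k)$, that is, the non-trivial coset. This coset has size $|\operatorname{Gal}(k_{\mathcal{O}}/k)| = [k_{\mathcal{O}}:k]$, giving exactly $[k_{\mathcal{O}}:k]$ involutions.

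There is no serious obstacle here: both parts are formal consequences of Proposition \ref{Pro2} together with the index-$2$ structure of $\operatorname{Gal}(k_{\mathcal{O}}/\mathbb{Q})$ over $\operatorname{Gal}(k_{\mathcal{O}}/k)$. The only minor care required is in justifying that $k_{\mathcal{O}}/\mathbb{Q}$ is Galois, which is already tacitly used in the statement of Proposition \ref{Pro2}, so it may be invoked without further comment.
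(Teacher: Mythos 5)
Your proof is correct and follows essentially the same route as the paper: both reduce an arbitrary involution to complex conjugation $\tau$ times an element of $\operatorname{Gal}(k_{\mathcal{O}}/k)$, apply Proposition \ref{Pro2} to $\tau$ to get $\sigma^2=1$, and identify the involutions with the nontrivial coset of $\operatorname{Gal}(k_{\mathcal{O}}/k)$, of size $[k_{\mathcal{O}}:k]$. The only cosmetic difference is that you write $\sigma=\gamma\tau$ while the paper manipulates $\tau\sigma$ directly.
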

\begin{proof}
\begin{enumerate}
\item Let $\tau$ be the complex conjugation, by Proposition \ref{Pro2}, $\tau\gamma=\gamma^{-1}\tau$ for all
$\gamma\in\operatorname{Gal}(k_{\mathcal{O}}/k)$. On the other hand since $k=\mathbb{Q}(\sqrt{-d})$, $\sigma|_k\neq1$
and $$\tau\sigma(\sqrt{-d})=\tau(-\sqrt{-d})=\sqrt{-d} \ \ \text{or } \ \ (\tau\sigma)|_k=1,$$
we have $\tau\sigma\in\operatorname{Gal}(k_{\mathcal{O}}/k)$. Therefore 
$$\sigma^2=\sigma1\sigma=\sigma\tau^2\sigma=\sigma\tau(\tau\sigma)=\sigma(\tau\sigma)^{-1}\tau=\sigma\sigma^{-1}\tau^{-1}\tau=1.$$

\item For any $\gamma\in \operatorname{Gal}(k_{\mathcal{O}}/k)$, $(\tau\gamma)|_k\neq1$ which means
 $\tau\gamma$ is an involution. On the other hand if $\phi$ is an involution, then

$$\phi=\tau^2\phi=\tau(\tau\phi)=\tau\gamma,$$
where $\gamma=\tau\phi\in \operatorname{Gal}(k_{\mathcal{O}}/k)$.
Thus there are exactly $\#\operatorname{Gal}(k_{\mathcal{O}}/k)=\#[k_{\mathcal{O}}: k]$ involutions in $ \operatorname{Gal}(k_{\mathcal{O}}/\mathbb{Q})$.
\end{enumerate}
\end{proof}


\section{Mordell-Weil groups with unbounded ranks}

Let $E$ be an elliptic curve over $\mathbb{Q}$  of conductor $N.$ 
We will present an infinite family ${{\mathcal A}_{N}}$ of prime numbers such that for any $p\in{{{\mathcal A}_{N}}}$ the imaginary quadratic field $ k_p :=\mathbb Q (\sqrt{-p})$
has odd class number and satisfies the Heegner hypothesis for $N$.
\begin{Proposition}\label{Pro3}
 If  $p\equiv 3 \ (\bmod\ 4)$ is a prime number then the class number of $\mathbb{Q}(\sqrt{-p})$ is odd.
\end{Proposition}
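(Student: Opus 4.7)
The plan is to invoke Gauss's genus theory for the imaginary quadratic field $k_p = \mathbb{Q}(\sqrt{-p})$ and show that its ideal class group has trivial $2$-torsion.

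First I would compute the field discriminant. Because $p \equiv 3 \pmod{4}$, we have $-p \equiv 1 \pmod{4}$, so the ring of integers is $\mathcal{O}_{k_p} = \mathbb{Z}\bigl[(1+\sqrt{-p})/2\bigr]$ and the discriminant of $k_p$ is $d_{k_p} = -p$. In particular exactly one rational prime ramifies in $k_p$, namely $p$ itself.

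Next I would apply the classical genus-theoretic formula: for an imaginary quadratic field whose discriminant is divisible by $t$ distinct rational primes, the $2$-rank of the ideal class group $\mathrm{Cl}(k_p)$ equals $t-1$. Since $t = 1$ here, the $2$-rank is $0$, so $\mathrm{Cl}(k_p)$ contains no element of order $2$. A finite abelian group with trivial $2$-torsion has odd order, so $h(k_p) = \#\mathrm{Cl}(k_p)$ is odd.

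There is essentially no obstacle beyond quoting the right facts: one must identify the correct form of the discriminant in the case $p \equiv 3 \pmod{4}$ (easy to slip to $-4p$ otherwise) and cite genus theory in the form above. If one prefers a self-contained argument, the same conclusion follows by counting ambiguous ideal classes directly: any element of order dividing $2$ in $\mathrm{Cl}(k_p)$ is represented by a product of the ramified prime ideals of $\mathcal{O}_{k_p}$, and when only one rational prime ramifies there is a single such prime ideal $\mathfrak{p}$ with $\mathfrak{p}^2 = (p)$ principal, so the only ambiguous class is the trivial one.
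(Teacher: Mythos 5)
Your proposal is correct and is essentially the same argument the paper relies on: the paper simply cites Proposition 3.11 of Cox, which is exactly the genus-theory statement that the $2$-torsion of the class group of discriminant $-p$ (note $-p \equiv 1 \pmod 4$, so the discriminant is indeed $-p$ with a single ramified prime) is trivial. Your unpacking of that citation, including the ambiguous-class argument with $\mathfrak{p} = (\sqrt{-p})$ principal, is accurate.
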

\begin{proof}
See Proposition 3.11 in \cite{Cox}.
\end{proof}
\begin{lemma}{\label{L1}}
Let  $N$ be a positive integer number.
There are infinitely many imaginary quadratic fields with odd class numbers that satisfy the Heegner hypothesis for $N.$
\end{lemma}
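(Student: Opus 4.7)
The plan is to use Dirichlet's theorem on primes in arithmetic progressions to produce infinitely many primes $p$ such that $k_p := \mathbb{Q}(\sqrt{-p})$ has odd class number and satisfies the Heegner hypothesis for $N$. To begin, Proposition \ref{Pro3} already gives odd class number whenever $p \equiv 3 \pmod{4}$, and for such primes the maximal order $\mathcal{O}_{k_p}$ has discriminant $-p$ (since $-p \equiv 1 \pmod{4}$). Consequently the discriminant condition $\gcd(\mathrm{Disc}(\mathcal{O}_{k_p}), N) = 1$ reduces simply to requiring $p \nmid N$, which excludes only finitely many candidates.

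The main step is to impose that every prime divisor of $N$ splits in $k_p$, and my plan is to translate each such condition into a congruence condition on $p$. For an odd prime $\ell \mid N$, $\ell$ splits in $k_p$ iff $-p$ is a nonzero square modulo $\ell$; since exactly $(\ell-1)/2 \geq 1$ of the nonzero residues have this property, I would fix one residue class $a_\ell \pmod{\ell}$ with $\gcd(a_\ell, \ell) = 1$ and $\left(\frac{-a_\ell}{\ell}\right) = 1$, and require $p \equiv a_\ell \pmod{\ell}$. If $2 \mid N$, then $2$ splits in $k_p$ iff $-p \equiv 1 \pmod{8}$, i.e.\ $p \equiv 7 \pmod{8}$, which automatically enforces $p \equiv 3 \pmod 4$. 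If $2 \nmid N$, one just imposes $p \equiv 3 \pmod{4}$ on its own.

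Combining these congruences via the Chinese Remainder Theorem produces a single condition $p \equiv c \pmod{M}$, where $M$ is $8 \prod_{\ell \mid N,\, \ell\text{ odd}} \ell$ when $2\mid N$ and $4 \prod_{\ell \mid N,\, \ell\text{ odd}} \ell$ when $2\nmid N$, and $c$ is coprime to $M$ by construction. Dirichlet's theorem then furnishes infinitely many primes $p$ in this arithmetic progression; discarding the finitely many that divide $N$, each remaining $p$ yields a field $k_p$ of the desired type, proving the lemma.

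I do not foresee a serious obstacle; the argument is essentially an exercise in elementary algebraic number theory combined with Dirichlet's theorem. The only mild care required is the bookkeeping at the prime $2$ (checking that the $p \equiv 7 \pmod{8}$ condition is consistent with, and strengthens, the $p \equiv 3 \pmod{4}$ condition) and verifying at each odd $\ell \mid N$ that the chosen residue $a_\ell$ is coprime to $\ell$, which is guaranteed by the choice $a_\ell \not\equiv 0$.
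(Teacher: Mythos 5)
Your proposal is correct and follows essentially the same route as the paper: both reduce the Heegner hypothesis and the odd-class-number condition to congruence conditions on $p$ (namely $p \equiv 3 \pmod 4$, $-p$ a square modulo each odd prime of $N$, and $-p \equiv 1 \pmod 8$ when $2 \mid N$) and then invoke Dirichlet's theorem on a single residue class modulo $8\prod_{\ell \mid N} \ell$. The paper merely packages the choice of residue class differently, taking $p \equiv a t^2$ with $a \equiv -1$ modulo $8$ and each odd $\ell \mid N$, whereas you assemble it by the Chinese Remainder Theorem; the content is the same.
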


\begin{proof}
Let $N=2^{\alpha}p_1^{\alpha_1}p_2^{\alpha_2}\dots p_r^{\alpha_r}$ where $p_1, p_2, \dots, p_r$ are all distinct odd prime factors of $N$. Let $a$ be an integer number such that 
$a\equiv-1\ (\bmod\ 8)$  and $a\equiv-1\ (\bmod\  p_i)$ for each $i=1, 2, \dots, r$. Let $t=8p_1p_2\dots p_r+1$. Since $\gcd(8p_1p_2\dots p_r, at^2)=1$ by
 Dirichlet's theorem there are infinitely many primes of the form $p=(8p_1p_2\dots p_r)m_p + at^2$ where $m_p$ is an integer number for each prime $p$. 
For such a prime $p$ and  the imaginary quadratic field   $k_p=\mathbb{Q}(\sqrt{-p})$  we have $Disc(k_p)=-p$ and 
 $\gcd(Disc(k_p), N)=1$. Since $2\nmid Disc(k_p)$ and $Disc(k_p)\equiv1\ (\bmod\ 8)$, 2 is split in $k_p$ and moreover we have $(\frac{Disc(k_p)}{p_i})=1$ which implies that $p_i$ is split in $k_p$ for each $i=1, 2, \dots, r$. Therefore $k_p$ satisfies the Heegner hypothesis for $N$. On the other hand, since  $p\equiv 3 \ (\bmod\ 4)$, the class number of $k_p$  
 is an odd number by Proposition \ref{Pro3}.
\end{proof}
We denote the set of the primes presented in the proof of Lemma \ref{L1} by ${\mathcal A}_{N}$.
 For any $p\in {{\mathcal A}_{N}}$  let $k_p :=\mathbb Q (\sqrt{-p})$  and 
 $H_p$ be the Hilbert class field attached to $k_p$ and  $H_{E}:=\prod_{p\in {{\mathcal A}_{N}} }H_p$.\\

\begin{Proposition}\label{pro3}
Let $p\in{{{\mathcal A}_{N}}}$ and $\sigma\in\operatorname{Gal}(H_p/\mathbb{Q})$ be an involution.
Then there is $y_p\in HP^{(N)}(k_p)$ such that $\sigma(y_p)=y_p$.
\end{Proposition}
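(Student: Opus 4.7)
The plan is a short parity argument: I will show that $\sigma$ permutes $HP^{(N)}(k_p)$, that this set has odd cardinality, and then conclude that the involution $\sigma$ must have a fixed point on it.

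First, I would check that $\sigma$ really does act on $HP^{(N)}(k_p)$. Being a Heegner point on $X_0(N)$ attached to $k_p$ is a $\mathbb{Q}$-rational condition: the modular curve $X_0(N)$ is defined over $\mathbb{Q}$, and the elliptic curves $E, E'$ occurring in a pair $(j(E), j(E'))$ have CM by the maximal order $\mathcal{O}_{k_p}$ if and only if $j(E)$ and $j(E')$ are roots of the Hilbert class polynomial of $k_p$, whose coefficients lie in $\mathbb{Z}$. Hence every element of $\operatorname{Gal}(\overline{\mathbb{Q}}/\mathbb{Q})$, and in particular $\sigma$, permutes the finite set $HP^{(N)}(k_p)$.

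Next I would put together the two ingredients needed for the parity count. On one hand, Theorem \ref{T3}(3) says that $\operatorname{Gal}(H_p/k_p)$ acts transitively on $HP^{(N)}(k_p)$, so $\#HP^{(N)}(k_p)$ divides $[H_p:k_p]=h_p$, the class number of $k_p$. On the other hand, by the definition of $\mathcal{A}_N$ in Lemma \ref{L1} we have $p\equiv 3\ (\bmod\ 4)$, so Proposition \ref{Pro3} gives that $h_p$ is odd, and therefore $\#HP^{(N)}(k_p)$ is odd.

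Now Corollary \ref{Cor1}(1) gives $\sigma^2=1$ in $\operatorname{Gal}(H_p/\mathbb{Q})$, so the induced action of $\sigma$ on $HP^{(N)}(k_p)$ partitions it into orbits of size $1$ or $2$. Since the total cardinality is odd, at least one orbit has size $1$; any representative of such an orbit is a point $y_p\in HP^{(N)}(k_p)$ with $\sigma(y_p)=y_p$, which is what we want. The only step that requires any care is the first, namely checking that $HP^{(N)}(k_p)$ is stable under all of $\operatorname{Gal}(\overline{\mathbb{Q}}/\mathbb{Q})$ and not merely under $\operatorname{Gal}(H_p/k_p)$; once that is in hand, the argument is a one-line orbit count driven entirely by the oddness of $h_p$ built into $\mathcal{A}_N$.
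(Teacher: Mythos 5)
Your proof is correct and follows essentially the same route as the paper: an orbit-parity argument using $\sigma^2=1$ and the oddness of $h_p$ (hence of $\#HP^{(N)}(k_p)$) to force a fixed point. The only difference is that you explicitly justify that $\sigma$ stabilizes $HP^{(N)}(k_p)$ via the rationality of the Hilbert class polynomial, a step the paper's proof takes for granted.
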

\begin{proof}
By Theorem \ref{T3} (3) let $HP^{(N)}({k_p})=\{ y_{1p}, y_{2p}, \dots, y_{h_pp} \}$ where $h_p$ is the class number of $k_p$.
Since $\sigma^2=1$, for each  $i=1,2, \dots, h_p$ we have $\sigma(y_{ip})=y_{j_{i}p}$ and
$\sigma (y_{j_{i}p})=y_{ip}$ or

 $$\sigma=\begin{pmatrix}\sqrt{-p}& -\sqrt{-p} & y_{1p} & y_{j_1p}& y_{2p} & y_{j_2p} &\dots & y_{h_pp} & y_{j_{h_p}p}\\
 -\sqrt{-p}& \sqrt{-p} & y_{j_{1}p} & y_{1p} & y_{j_{2}p} & y_{2p}&\dots & y_{j_{h_p}p}& y_{h_pp} \end{pmatrix}.$$
 Now since $h_p$ is  an odd number there is an $1\leq i\leq h_p$ such that $y_{ip}=y_{j_ip}$ or  $\sigma(y_{ip})=y_{ip}$.
\end{proof}
For all $p\in{{{\mathcal A}_{N}}}$  by Corollary \ref{Cor1} (2) there are  
$\psi_{1p}, \psi_{2p}, \dots, \psi_{h_pp}$ distinct involutions in  $\operatorname{Gal}(H_p/\mathbb{Q})$ 
where $h_p$ is  the class number of $k_p$. Let
$$\Sigma_j:=\{\sigma\in \operatorname{Gal}(\overline{\mathbb{Q}}/ \mathbb{Q})\ |\ \  \sigma|_{H_p}
=\psi_{jp}\ \ \text{for any} \ \  p\in{{{\mathcal A}_{N}}}\},$$
for a $1\leq j\leq h$ where $h=min\{h_p|\ \ p\in{{{\mathcal A}_{N}}}\}$.
$\Sigma_j$ is an infinite family of elements of $\operatorname{Gal}(\overline{\mathbb{Q}}/ \mathbb{Q})$ for any  $1\leq j\leq h$.\\

Let  $G=\langle\sigma_1, \dots, \sigma_n\rangle$ be a finitely generated subgroup of 
$\operatorname{Gal}(\overline{\mathbb{Q}}/ \mathbb{Q})$  where $\sigma_i$ is an element of $\Sigma_j$ for each $i=1, \dots, n$ and a  $1\leq j\leq h$,
it will be shown that the rank of $E({H_{E}}^G)$ is infinite where ${H_{E}}^G$  is the $G$-fixed sub-field of $H_E$.
To prove this we consider Heegner points attached to the Hilbert class fields $H_{p}$ for any $p$, and use 
the following theorem to obtain an infinite sequence of independent
non-torsion points in $E(H_{E})$ and then by using the modularity of $E$ it will be shown
that the rank of $E({H_{E}}^G)$ is infinite.\\
\begin{theorem}
Let $E$ be an elliptic curve over $\mathbb{Q}$ with conductor $N$, and let  $\Phi_E:  X_0 (N)  \xrightarrow{} E$  be a modular parametrization. Assume that we are given the following:
\begin{itemize}{\label{T1}}
\item $k_1, k_2,\dots , k_r$ are distinct  imaginary quadratic fields satisfying the Heegner hypothesis for N,
\item $h_1, h_2,\dots , h_r$ are the class numbers of $k_1, k_2,\dots k_r$ respectively,
\item $y_1, y_2, \dots , y_r$ are Heegner points on the modular curve attached to $k_1, k_2,\dots , k_r$
respectively,
\item $P_1, P_2,\dots , P_r$ are the attached Heegner points on $E$, $P_i:=\Phi_E (y_i)$.
\end{itemize}
There exists a constant $C = C(E,\Phi_E )$ such that, if the odd parts of the class numbers of 
$k_1, k_2,\dots , k_r$
are larger than $C$, then the points $P_1, P_2,\dots , P_r$ are independent in 
$E(\overline{\mathbb{Q}})/E_{tors}$.
\end{theorem}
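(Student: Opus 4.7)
The plan is to argue by contradiction using the class group action on Heegner points, combined with a nonvanishing theorem for twisted $L$-derivatives that quantifies the independent contributions coming from each $k_i$. Suppose there are integers $a_1,\ldots,a_r$, not all zero, with $\sum_{i=1}^r a_i P_i$ torsion in $E(\overline{\mathbb Q})$; the aim is to contradict this once the odd parts of $h_1,\ldots,h_r$ exceed a threshold $C(E,\Phi_E)$.

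First, by Theorem \ref{T3}, $\operatorname{Gal}(H_{k_i}/k_i)\cong\operatorname{Cl}(k_i)$ acts transitively on $HP^{(N)}(k_i)$, so the orbit of $P_i$ spans a $\operatorname{Cl}(k_i)$-stable subgroup of $E(H_{k_i})$. For a character $\chi$ of $\operatorname{Cl}(k_i)$ I would form the isotypic projection
\[
e_\chi P_i \;:=\; \frac{1}{h_i}\sum_{\sigma\in\operatorname{Cl}(k_i)}\chi(\sigma)^{-1}\sigma(P_i)\;\in\; E(H_{k_i})\otimes\mathbb C.
\]
Complex conjugation acts on $\operatorname{Cl}(k_i)$ by inversion (the involution identity exploited in Proposition \ref{Pro2}), so to control signs of Heegner points under $\tau$ I would restrict attention to characters of odd order; this is why the hypothesis bounds the odd part of $h_i$ rather than $h_i$ itself. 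The structural claim is that for at least one odd-order $\chi$ one has $e_\chi P_i\neq 0$, and that isotypic components attached to different $k_i$ are mutually disjoint inside $E(H_{k_1}\cdots H_{k_r})\otimes\mathbb C$, the latter following from the fact that the Hilbert class fields $H_{k_i}$ ramify only at the distinct discriminants $-p_i$.

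The nonvanishing $e_\chi P_i\neq 0$ is the deep input: by Gross--Zagier, the canonical height $\hat h(e_\chi P_i)$ equals, up to a nonzero constant depending only on $E$ and $\Phi_E$, the central derivative $L'(E/k_i,\chi,1)$, and by Rohrlich's theorem (refined by Cornut--Vatsal) this derivative is nonzero for all but finitely many characters $\chi$ of $\operatorname{Cl}(k_i)$, the exceptional set having size bounded uniformly in $i$ by arithmetic invariants of $E$ and $\Phi_E$ alone. Taking $C(E,\Phi_E)$ to be a little more than this uniform bound ensures that whenever $h_i^{\mathrm{odd}}>C$ an admissible odd-order character exists for every $k_i$; applying $e_\chi$ to the hypothetical relation and using the disjointness of components across $i\neq j$ then forces $a_i=0$ for each $i$ in turn. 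The main obstacle I expect is securing the uniformity in $i$ of both the nonvanishing statement and the Gross--Zagier proportionality constant, so that $C$ depends only on $E$ and $\Phi_E$ and not on the particular imaginary quadratic field in the family; this is what lifts the theorem beyond a formal consequence of Gross--Zagier alone.
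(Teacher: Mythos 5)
The paper offers no argument of its own for this statement---its ``proof'' is a citation to Theorem 1.1 of the reference \cite{Sah} (this is the Rosen--Silverman-type independence theorem for Heegner points attached to distinct imaginary quadratic fields)---so the real comparison is between your sketch and that external proof. Your sketch has a genuine gap at its load-bearing step. The nonvanishing input you invoke does not exist in the form you need: Rohrlich's theorem and the Cornut--Vatsal results give nonvanishing of $L(E/K,\chi,1)$ or $L'(E/K,\chi,1)$ for all but finitely many \emph{ring class characters of $p$-power conductor over a single fixed} imaginary quadratic field $K$ (the anticyclotomic aspect). They say nothing uniform as $K$ itself ranges over all imaginary quadratic fields satisfying the Heegner hypothesis. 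What your argument requires is: for every such $k$ with odd part of $h_k$ exceeding $C$, some odd-order unramified character $\chi$ of $\operatorname{Cl}(k)$ has $L'(E/k,\chi,1)\neq 0$, with the number of exceptional $\chi$ bounded by a constant depending only on $(E,\Phi_E)$ and not on $k$. That is a nonvanishing statement in the discriminant aspect, which is a hard open analytic problem in general; you correctly identify it as ``the main obstacle'' and then assert it without justification. Since the entire content of the theorem is precisely that $C$ is independent of the fields $k_1,\dots,k_r$, the proof is missing its essential ingredient.

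For contrast, the cited proof is elementary and avoids $L$-functions and Gross--Zagier entirely. Starting from a hypothetical relation $\sum_i a_iP_i\in E_{\mathrm{tors}}$, one applies Galois elements that move only the $i$-th Heegner point, uses the simply transitive action of $\operatorname{Cl}(k_i)$ on the $h_i$ \emph{distinct} points $y_i^{\sigma}$ on $X_0(N)$, the bound $\#\Phi_E^{-1}(Q)\le\deg\Phi_E$ on fibers of the modular parametrization, the interaction of complex conjugation with the Atkin--Lehner involution (this is where the \emph{odd part} of $h_i$ enters---through a possible collapsing of the orbit by a factor of $2$, not through a restriction to odd-order characters), and a uniform bound on the relevant torsion; that is why the resulting constant depends only on $E$ and $\Phi_E$. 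Two further repairs your formal setup would need even granting nonvanishing: the ``disjointness of isotypic components'' requires the compositum $H_{k_1}\cdots H_{k_r}$ to realize the full product $\prod_i\operatorname{Cl}(k_i)$, which can fail for general distinct imaginary quadratic fields (the theorem does not assume prime discriminants, so the Hilbert class fields may share nontrivial subfields); and the Gross--Zagier proportionality constant itself depends on $k_i$ (through $\sqrt{|D_{k_i}|}$ and the unit count $u_{k_i}$), which is harmless for qualitative nonvanishing but must be acknowledged if you want any uniformity claim to be meaningful.
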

\begin{proof}
See Theorem 1.1 in \cite{Sah}.
\end{proof}
   
Let $p_1,\dots , p_r\in{{\mathcal A}_{N}}$ and $k_{p_1}, \dots ,  k_{p_r}$  be the attached  
imaginary quadratic fields, respectively and $y_{p_i}$ be a Heegner point attached to $k_{p_i}$
for each $i=1,2, \dots  ,r$. Let $P_{p_i}:=\Phi_E (y_{p_i})$ be the attached Heegner point 
on $E$ (which is in $E(H_{p_{i}})$ by Theorem \ref{Theo2}) for each $i=1,2, \dots ,r.$ 
Since we have the following result due to Siegel (see page 149 in \cite{Cox}) 
$$\lim_{p\rightarrow +\infty}\frac{\log h_p}{\log p}=\frac{1}{2},$$
by removing finitely many $p\in{\mathcal A}_N$ if it is necessary we can assume that 
 $h_p>C = C(E,\Phi_E )$ for each $p\in{\mathcal A}_N$. On the other hand $h_{p_i}$ 
 the class number of $k_{p_i}$ is odd, therefore by Theorem \ref{T1} the points $P_{p_1},\dots , P_{p_r}$
 are independent in $E(\prod_{i=1}^{r}H_{p_i})/E_{tors} (\prod_{i=1}^{r}H_{p_i})$.
 Thus
the Heegner points in the set $\{P_p\}_{p\in{\mathcal{A}_N}}$ are non-torsion independent points 
in $E(H_{E})$. 

\begin{theorem}{\label{T4}}
Let $E$ be an elliptic curve over $\mathbb{Q}$ with conductor $N$ and $G=\langle\sigma_1, \dots, \sigma_n\rangle$
be a finitely generated subgroup of $\operatorname{Gal}(\overline{\mathbb{Q}}/ \mathbb{Q})$  
where $\sigma_i$  for each $i=1,2, \dots, n$  is an element of $\Sigma_j$ for a $1\leq j\leq h$. 
Then the rank of $E({H_E}^G)$  is infinite.
\end{theorem}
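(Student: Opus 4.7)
The plan is to exhibit, for each $p \in \mathcal{A}_N$, a Heegner point $P_p \in E(H_p) \subset E(H_E)$ that is simultaneously fixed by every generator $\sigma_i$ of $G$, and then invoke the non-torsion independence of $\{P_p\}_{p\in\mathcal{A}_N}$ already established in the discussion preceding the theorem.

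First, I fix $p \in \mathcal{A}_N$ and apply Proposition \ref{pro3} to the involution $\psi_{jp} \in \operatorname{Gal}(H_p/\mathbb{Q})$, producing a Heegner point $y_p \in HP^{(N)}(k_p)$ with $\psi_{jp}(y_p) = y_p$. Setting $P_p := \Phi_E(y_p)$, Theorem \ref{Theo2} guarantees $P_p \in E(H_p)$. Because $\Phi_E$ is defined over $\mathbb{Q}$, it commutes with the Galois action, hence $\psi_{jp}(P_p) = \Phi_E(\psi_{jp}(y_p)) = P_p$. Now for any generator $\sigma_i$ of $G$, the hypothesis $\sigma_i \in \Sigma_j$ says exactly that $\sigma_i|_{H_p} = \psi_{jp}$, and since $P_p \in E(H_p)$ the action of $\sigma_i$ on $P_p$ only depends on this restriction; thus $\sigma_i(P_p) = \psi_{jp}(P_p) = P_p$. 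Consequently $G$ fixes each $P_p$, so $P_p \in E(H_E)^G = E(H_E^G)$.

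Finally, the paragraph preceding the theorem shows, via Sahu's independence theorem and Siegel's asymptotic $\log h_p / \log p \to 1/2$, that after discarding the finitely many $p$ with $h_p \le C(E,\Phi_E)$ the family $\{P_p\}_{p\in\mathcal{A}_N}$ consists of non-torsion points that are $\mathbb{Z}$-linearly independent in $E(H_E)/E_{tors}$. Since all of these points lie in $E(H_E^G)$ and $\mathcal{A}_N$ is infinite by Lemma \ref{L1}, the rank of $E(H_E^G)$ is infinite. The substantive difficulties have already been absorbed into the earlier material (Proposition \ref{pro3} for extracting a fixed Heegner point from an involution using the oddness of $h_p$, and the Sahu-type independence result across varying $k_p$); the one potentially delicate point in the theorem itself is the compatibility of the fixed point $P_p$ with every generator of $G$ simultaneously, but this is automatic from the definition of $\Sigma_j$ because all $\sigma_i$ restrict to the same $\psi_{jp}$ on each $H_p$, reducing the question at each $p$ to a single involution.
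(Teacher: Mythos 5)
Your proposal is correct and follows essentially the same route as the paper's own proof: use Proposition \ref{pro3} to extract a $\psi_{jp}$-fixed Heegner point for each $p\in\mathcal{A}_N$, observe that every generator $\sigma_i$ acts on $H_p$ through $\psi_{jp}$ so the point (and its image under the $\mathbb{Q}$-rational map $\Phi_E$) lands in $E(H_E^G)$, and then invoke the independence of the family $\{P_p\}$ established before the theorem via Sahu's result and Siegel's bound. No meaningful differences from the paper's argument.
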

\begin{proof}
For any $i=1,2, \dots, n$ we have $\sigma_i\in\Sigma_j$ so $\sigma_i|_{H_p}=\psi_{jp}$
where $\psi_{jp}$ is an involution in $\operatorname{Gal}(H_P/ \mathbb{Q})$ for any $p\in{\mathcal A}_{N}$.
Let $p\in{\mathcal A}_{N}$, by Proposition \ref{pro3} there is $y_{jp}\in HP^{(N)}(k_p)$ 
such that $\psi_{jp}(y_{jp})=y_{jp}$. Thus for each  $i=1,2, \dots, n$
$$\sigma_i(y_{jp})=\psi_{jp}(y_{jp})=y_{jp},$$
or   $y_{jp}\in{{H_p}^G}$ which implies that 
 $$B_j:=\{y_{jp}:\ \  \text{for any}\ \ p\in{\mathcal A}_{N} \}\subset H_E^G.$$
 Now let $\Phi_E: X_0(N)\rightarrow E$ be a modular parametrization of $E$, by Theorem \ref{Theo2}
 $\Phi_E(B_j)\subset E(H_E)$ and since $\Phi_E$ is a map of algebraic curves defined over $\mathbb{Q}$
and $B_j\subset H_E^G$ indeed we have $\Phi_E(B_j)\subset E(H_E^G)$. Therefore
$\Phi_E(B_j)\subset \{P_p\}_{p\in{\mathcal{A}_N}}$ is an infinite set of non-torsion independent points in
$E(H_E^G)$ which implies that the rank of $E(H_E^G)$ is infinite.
\end{proof}
\begin{Note}
If one can show that $\#\Phi_E(HP^{(N)}(k_p))=\#HP^{(N)}(k_p)$, then one may  use the same idea
directly on the set points $P_p\in E(H_p)$ instead of doing it on the points $y_p$ on the modular curve.
\end{Note}

\begin{Corollary}
Let $E$ be an elliptic curve over $\mathbb{Q}$ and $G=\langle\sigma_1, \dots, \sigma_n\rangle$
 be a finitely generated subgroup of $\operatorname{Gal}(\overline{\mathbb{Q}}/ \mathbb{Q})$ 
 where $\sigma_i$ for each $i=1, \dots, n$  is an element of $\Sigma_j$ for a $1\leq j\leq h$. 
 Since ${H_E}^G < {\overline{\mathbb{Q}}}^G$   the rank of $E( {\overline{\mathbb{Q}}}^G)$  is infinite.
 Therefore  Larsen's conjecture holds for the case in which the generators of $G$ are in $\Sigma_j$ for a $1\leq j\leq h$.
\end{Corollary}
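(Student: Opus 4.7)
The plan is to produce, for each prime $p \in \mathcal{A}_N$, a single Heegner point on $E$ that lies in the fixed field $E(H_E^G)$, and then invoke the independence result cited from Sahasrabudhe to conclude that the collection of these points has infinite rank in $E(H_E^G)/E_{\mathrm{tors}}$.

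First I would unpack the hypothesis: since each generator $\sigma_i$ belongs to the family $\Sigma_j$, by the very definition of $\Sigma_j$ we have $\sigma_i|_{H_p} = \psi_{jp}$ for every $p \in \mathcal{A}_N$, where $\psi_{jp}$ is one of the fixed involutions in $\operatorname{Gal}(H_p/\mathbb{Q})$. Fix $p \in \mathcal{A}_N$. Proposition \ref{pro3} (whose proof relied on $h_p$ being odd, which in turn used Proposition \ref{Pro3} together with Lemma \ref{L1}) supplies a Heegner point $y_{jp} \in HP^{(N)}(k_p)$ fixed by $\psi_{jp}$. By Theorem \ref{T3}(2), $y_{jp} \in X_0(N)(H_p)$, so the action of $\sigma_i$ on $y_{jp}$ factors through $\sigma_i|_{H_p} = \psi_{jp}$, giving $\sigma_i(y_{jp}) = y_{jp}$ for all $i$; hence $y_{jp} \in X_0(N)(H_p^G) \subset X_0(N)(H_E^G)$.

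Next I would apply the modular parametrization $\Phi_E \colon X_0(N) \to E$. Since $\Phi_E$ is defined over $\mathbb{Q}$, it commutes with the action of $\operatorname{Gal}(\overline{\mathbb{Q}}/\mathbb{Q})$, so setting $P_p := \Phi_E(y_{jp})$ gives a point fixed by every $\sigma_i$. Combining this with Theorem \ref{Theo2} yields $P_p \in E(H_p^G) \subset E(H_E^G)$. Ranging over $p \in \mathcal{A}_N$ produces an infinite sequence $\{P_p\}_{p \in \mathcal{A}_N} \subset E(H_E^G)$.

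Finally, to get infinite rank, I would invoke the independence theorem from \cite{Sah} cited just before the statement. Siegel's asymptotic $\log h_p / \log p \to 1/2$ allows us to discard finitely many $p$ and assume $h_p > C(E,\Phi_E)$, and each $h_p$ is odd by Proposition \ref{Pro3}, so its odd part equals $h_p > C$. Therefore for every finite subcollection $p_1,\dots,p_r$, the corresponding points $P_{p_1},\dots,P_{p_r}$ are independent in $E(\overline{\mathbb{Q}})/E_{\mathrm{tors}}$, which forces the rank of $E(H_E^G)$ to be infinite. The only real subtlety is the very first step — making sure that ``fixed by the involution on $H_p$'' really does promote to ``fixed by $\sigma_i$'' — and this is handled cleanly by the fact that Heegner points attached to $\mathcal{O}_{k_p}$ live inside $H_p$ by Theorem \ref{T3}, so the Galois action on them depends only on the restriction to $H_p$.
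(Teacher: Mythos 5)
Your proposal is correct and takes essentially the same route as the paper: it reproduces the proof of Theorem~\ref{T4} (a $\psi_{jp}$-fixed Heegner point for each $p\in{\mathcal A}_N$ via Proposition~\ref{pro3}, pushed to $E(H_E^G)$ by the $\mathbb{Q}$-rational parametrization $\Phi_E$, with independence from Sahasrabudhe's theorem plus Siegel's bound and the oddness of $h_p$). The only step you leave implicit is the one the Corollary actually adds on top of Theorem~\ref{T4}, namely the trivial inclusion $E(H_E^G)\subset E(\overline{\mathbb{Q}}^G)$, which transfers the infinite rank to $E(\overline{\mathbb{Q}}^G)$.
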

\begin{Remark}
By the presented method the rank of $E((\prod_{p\in{{\mathcal A}^{'}}}{H_p})^G)$ 
is infinite for any  arbitrary infinite subset  ${\mathcal A}^{'}$  of ${{\mathcal A}_{N}}.$\\
Let ${\mathcal A}_{N}={\{p_i\}}_{i\in{\mathbb N}}$ and ${\mathcal A}_i:={\mathcal A}_{N}-\{p_j| \ \  j=1, 2,\dots, i\}$ and $H_i:=\prod_{p\in{{\mathcal A}_i}}H_p$ 
then we have the following chain of the Mordell-Weil groups with unbounded ranks.\\
$$ \dots  <E({H_2}^G)<E({H_1}^G)<E({H_E}^G)<E( {\overline{\mathbb{Q}}}^G),$$
where the generators of $G$ are in   $\Sigma_j$ for a $1\leq j\leq h$.
\end{Remark}
\begin{theorem}
Let $E$ be an elliptic curve over $\mathbb{Q}$ with conductor $N$. Let 
$\{k_i\}_{i\in I}$ be an infinite family of imaginary quadratic fields  that satisfy 
the Heegner hypothesis for $N$. Then,
 \begin{itemize}
 \item if the odd part of the class number $h_i$ is larger than $C=C(E, \Phi_E)$ for each $i\in I$,
  then there are some infinite families $\Sigma$ of elements of
   $\operatorname{Gal}(\overline{\mathbb{Q}}/\mathbb{Q})$ such that for a finitely generated group 
  $G=\langle\sigma_1, \dots, \sigma_n\rangle$ in which $\sigma_j\in \Sigma$ for each $j=1, 2,\dots, n$  
  we have $Rank(E({H_E}^G))=\infty$ where $H_E=\prod_{i\in I} H_i$ and $H_i$ is the Hilbert class field attached to 
  $k_i$ for each $i\in I$.
  \item There is a chain 
  $$ \dots  <E({H_2}^G)<E({H_1}^G)<E({H_E}^G)<E( {\overline{\mathbb{Q}}}^G),$$
  where $Rank(E({H_j}^G))=\infty$ for each $j=1, 2, 3,\dots$.
  \end{itemize}
\end{theorem}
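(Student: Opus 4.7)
The plan is to adapt the construction underlying Theorem \ref{T4} to the more general family $\{k_i\}_{i\in I}$. In that earlier result the fields $\mathbb{Q}(\sqrt{-p})$ with $p\equiv 3\pmod 4$ were used so that the class numbers $h_p$ were odd and Proposition \ref{pro3} guaranteed that every involution in $\operatorname{Gal}(H_p/\mathbb{Q})$ fixed some Heegner point. Here the class numbers $h_i$ need not be odd; what we retain is only that their odd parts exceed the Silverberg--Sah constant $C=C(E,\Phi_E)$, which is precisely the hypothesis needed to invoke Theorem \ref{T1}. So the strategy is to replace Proposition \ref{pro3} by an existence statement (one involution with a fixed Heegner point per $i$) rather than a universal statement, package these involutions into an infinite family $\Sigma$ exactly as in the construction of the $\Sigma_j$ preceding Theorem \ref{T4}, and then run the proof of Theorem \ref{T4} verbatim.

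For the first step, I would show that for every $i\in I$ there is an involution $\psi_i \in \operatorname{Gal}(H_i/\mathbb{Q})$ with a fixed Heegner point. This is a purely group-theoretic observation: by Theorem \ref{T3}(3) the group $\operatorname{Gal}(H_i/k_i)$ acts simply transitively on $HP^{(N)}(k_i)$, so for any Heegner point $y_i$ the stabilizer in $\operatorname{Gal}(H_i/\mathbb{Q})$ has order $[H_i:\mathbb{Q}]/h_i = 2$ and is generated by an element not lying in $\operatorname{Gal}(H_i/k_i)$, hence by an involution. Equivalently, using the twisting relation $\tau\gamma=\gamma^{-1}\tau$ of Proposition \ref{Pro2}, if $\tau$ is complex conjugation and $\tau(y_i)=\alpha_0 y_i$ in the simply transitive action, then $\psi_i := \tau\alpha_0$ is an involution with $\psi_i(y_i)=y_i$. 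No parity assumption on $h_i$ is used.

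With these choices fixed, set
$$\Sigma := \{\sigma \in \operatorname{Gal}(\overline{\mathbb{Q}}/\mathbb{Q}) : \sigma|_{H_i}=\psi_i \text{ for every } i\in I\},$$
which is infinite for the same reason the $\Sigma_j$ are infinite in the proof of Theorem \ref{T4}. For any $G=\langle\sigma_1,\dots,\sigma_n\rangle$ with each $\sigma_j\in\Sigma$, every generator fixes every $y_i$, so the Heegner points $P_i := \Phi_E(y_i) \in E(H_i)$ (Theorem \ref{Theo2}) all lie in $E(H_E^G)$. The odd-part hypothesis now feeds Theorem \ref{T1} to give that $\{P_i\}_{i\in I}$ is an infinite independent family in $E(\overline{\mathbb{Q}})/E_{\mathrm{tors}}$, so $\mathrm{Rank}(E(H_E^G))=\infty$, proving the first bullet.

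For the second bullet, enumerate $I=\{i_1,i_2,\dots\}$ and set $H^{(j)} := \prod_{\ell>j} H_{i_\ell}\subseteq H_E$. Applying the argument just given to the cofinite subfamily $\{k_{i_\ell}\}_{\ell>j}$ shows $\mathrm{Rank}(E((H^{(j)})^G))=\infty$, and the tower $\cdots\subseteq H^{(2)}\subseteq H^{(1)}\subseteq H_E \subseteq \overline{\mathbb{Q}}$ yields the claimed chain of inclusions. The one genuinely new ingredient, and therefore the main potential obstacle, is the existence statement in the second paragraph: the extension from ``odd class number forces a fixed Heegner point'' (Proposition \ref{pro3}) to ``for arbitrary class number there is always at least one involution fixing a Heegner point''. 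Everything else is either bookkeeping or a direct appeal to Theorems \ref{Theo2} and \ref{T1}.
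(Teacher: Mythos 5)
Your proposal is correct and, at the level of overall strategy, is the same as the paper's: the paper's entire proof of this theorem is the single sentence ``One can do the same as in the Proof of Theorem \ref{T4}.'' But your write-up is not merely an expansion of that sentence; it supplies a step that a verbatim repetition of Theorem \ref{T4} cannot provide. The proof of Theorem \ref{T4} rests on Proposition \ref{pro3}, whose counting argument (an order-two permutation of an odd-cardinality set has a fixed point) uses that the class number $h_p$ itself is odd. In the present theorem only the \emph{odd part} of $h_i$ is constrained, so $h_i$ may be even and Proposition \ref{pro3} does not apply: an involution could a priori act freely on $HP^{(N)}(k_i)$. You correctly isolate this as the one genuinely new ingredient and replace the universal statement (``every involution fixes some Heegner point'') by an existence statement (``for each $i$ some involution fixes a chosen Heegner point $y_i$''), proved either by the orbit--stabilizer computation $[H_i:\mathbb{Q}]/h_i=2$ or, more robustly, by setting $\psi_i=\tau\alpha_0$ where $\alpha_0(y_i)=\tau(y_i)$ and invoking Proposition \ref{Pro2}; both arguments are valid within the paper's conventions (Theorem \ref{T3}(3) giving a simply transitive action of $\operatorname{Gal}(H_i/k_i)$ on $HP^{(N)}(k_i)$) and need no parity hypothesis. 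The remainder --- defining $\Sigma$ by prescribing restrictions, pushing the fixed points through $\Phi_E$ into $E(H_E^G)$, invoking Theorem \ref{T1} via the odd-part hypothesis, and passing to cofinite subfamilies for the chain --- is exactly the paper's argument. The only caveat, which you inherit from the paper rather than introduce, is that the nonemptiness of $\Sigma$ (the existence of a global $\sigma$ whose restrictions to all the $H_i$ simultaneously equal the chosen $\psi_i$) requires the prescribed involutions to be compatible on the compositum; the paper asserts this without proof for the $\Sigma_j$ and you defer to that assertion, so it is a shared gap rather than a defect of your approach.
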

\begin{proof}
One can do the same as in the Proof of  Theorem \ref{T4}.
\end{proof}

\section*{Acknowledgments}
I would like to thank my family for supporting me in Baren-Dorud-Lorestan where I got the idea of this paper.

\selectlanguage{english}
\bibliographystyle{alpha}

\bibliography{mybibfile}

\end{document}